\newtheorem{theorem}{Theorem}
\newtheorem{corollary}{Corollary}[theorem]
\newtheorem{lemma}[theorem]{Lemma}
\theoremstyle{definition}
\colorlet{shadecolor}{orange!15}
\theoremstyle{definition}
\begin{document}

\thispagestyle{empty}

\begin{center}
{\LARGE \bf Polynomial formulations of Multivariable Arithmetic Progressions of Alternating Powers}\\
Brian Nguyen
\end{center}

\begin{abstract}
    Taking inspiration from the work of Lanphier \cite{LANPHIER2022125716}, a generalized multivariable polynomial formulation for sums of alternating powers is given, as well as analogous sums. Furthermore, an analog of the Euler-Maclaurin Summation Formula is established and used to give asymptotic formulas for multivariable-type Lerch-Hurwitz zeta functions.
\end{abstract}
\section{Introduction}
\noindent \indent In this paper, we give a multivariable generalization of a class of analogs to Faulhaber's formula, most notably the alternating case. More precisely, let $A_r = (a_1, \cdots, a_r), N_r = (n_1, \cdots n_r) \in \mathbb{Z}_{\geq 0}^r$ be vectors, $x \in \mathbb{R}$, $s \in \mathbb{Z}_{\geq 0}$, and an imaginary constant $c$. Let $b_r = \underbrace{(b, b, \cdots ,b)}_{r b\text{'s}}$, and for a pair of $r$-dimensional vectors let $\cdot$ be the standard dot product. We give a polynomial formulation on $s$ for the sum
\begin{equation*}
    \sum_{M_r = 0_r}^{N_r} (A_r \cdot M_r + x)^s e^{c A_r \cdot M_r},
\end{equation*}
where $\sum_{M_r = 0_r}^{N_r} = \sum_{m_1 = 0}^{n_1}  \cdots \sum_{m_r = 0}^{n_r}$ denotes the multi-sum. Additionally, for a $q$-times differentiable function $g$ defined over an interval $[lm,ln]$, where $m,n,l$ are integers and $lm < ln$, and a rational $\theta \in \mathbb{Q} \backslash \mathbb{Z}$, we give a formula for $\sum_{r = lm+1}^{ln} g(r)e^{2i \pi \theta r} $ in a manner analogous to the Euler-Maclaurin Summation formula. Using this result, we may loosen restrictions on $s$ and find asymptotic polynomial formulations of the sum above for $s \in \mathbb{C} \backslash \mathbb{Z}$ and $Re(s) > - 1$, with the additional condition that $c$ is a rational multiple of $2 i \pi$ and the product of each entry of $A_r$ with $c$ is not an integral multiple of $2 i \pi$.

The Bernoulli Numbers, denoted as $B_k$, are a sequence of numbers that were discovered in an attempt to find a general formula for the sum $\sum_{m=1}^n m^k$, where $n$ and $k$ are positive integers. While these values are refer to Jakob Bernoulli, they were actually initially discovered by mathematician Seki Takakazu (1642 - 1708) who happened to also be working on the sum of powers as well \cite{7b2ec4b0-bc03-30e6-ab16-1792c3f36b3f}. In 1755, Euler defined the Bernoulli numbers by the generating function
\begin{equation}\label{bernNum}
    \frac{z}{e^z-1} = \sum_{k=0}^{\infty} B_k \frac{ z^k}{k!},
\end{equation}
for $z \in \mathbb{C},$ where the $B_k$'s can be computed via Taylor Series expansion. Accompanied by the Bernoulli numbers are the Bernoulli polynomials, $B_k(y)$ \cite{arakawa2014bernoulli}, defined by the function \cite{Zagier2014CuriousAE}
\begin{equation*}
    \frac{ze^{zy}}{e^z-1} = \sum_{k=0}^{\infty}  B_k(y) \frac{z^k}{k!}.
\end{equation*}
It is well known that $B_m(y) = \sum_{k=0}^m \binom{m}{k}B_{m-k}x^k$. It was Johann Faulhaber who initially made significant strides on the problem of the sum of integer powers, and in 1631 masterwork \textit{Academia Algrebrae} \cite{faulhaber1631academia}, he established formulas for powers up to 23. Taking inspiration from his work, Bernoulli found a closed expression for the sum $\sum_{m=1}^n m^k$ for all $k$, now known as Faulhaber's formula \cite{Knuth_1993}, which is commonly expressed as
\begin{equation*}
    \sum_{m=1}^n m^k = \frac{1}{k+1}(B_{m+1}(k+1)-B_{m+1}(1)).
\end{equation*}

Variations on Bernoulli numbers and Faulhaber's formula were developed as well (see \cite{Jang_2012}, \cite{Petersen_2015}, \cite{Kim_Kwon_Lee_Seo_2014}). Dominic Lanphier \cite{LANPHIER2022125716} had used the properties of  the Bernoulli-Barnes numbers to find a formula for multivariable generalization of Faulhaber's. Alternating sums of powers of integers such as $\sum_{k =0}^n (-1)^m m^k  $; have also been studied, as seen in \cite{kim2005note}; for this problem, Euler numbers $E_m$ and Euler Polynomials $E_m(x)$ were defined with the generating functions
    \begin{equation}\label{1eq}
        \frac{2}{e^z+1} = \sum_{n =0}^{\infty} E_n \frac{z^n}{n!},
    \end{equation}
and
    \begin{equation}\label{2eq}
        \frac{2e^{xz}}{e^z+1} = \sum_{n=0}^{\infty} E_n(x)\frac{z^n}{n!}.
    \end{equation}
\indent  One can see the numerous properties exhibited by Euler numbers and polynomials (see \cite{Srivastava_Choi_2012}), as well as its many connections with Bernoulli polynomials (see \cite{Brillhart+1969+45+64}). In particular, Euler polynomials can be used to give a closed expression for the case of the alternating sum \cite{kim2005note} in a manner analogous to Faulhaber's formula. A common expression for this formula is
\begin{equation*}
  \sum_{k =0}^n (-1)^m m^k  = \frac{1}{2}(E_p(n) + (-1)^n E_m(n+1) ).
\end{equation*}

Various other literature such as \cite{Liu}, \cite{Barbero_G__2020}, appears to cite a different definition for Euler numbers instead, as given below:
\begin{equation*}
    \frac{2}{e^z + e^{-z}} = \sum_{n=0}^{\infty} E_n^* \frac{z^n}{n!}.
\end{equation*}
To ensure distinction, we will denote this set of Euler numbers as $E_n^*$ instead; our primary focus will be on $E_n$.

Taking inspiration from the mathematical structure of (\ref{1eq}) and (\ref{2eq}) as well as  Dominic's work \cite{LANPHIER2022125716}, we give a generalized definition of Euler numbers. Let $A_r = (a_1,a_2, \cdots, a_r) \in \mathbb{Z}_{\geq 0}^r$. We define the generalized Euler numbers by the following generating function:
\begin{equation*}
    \frac{2^r}{\prod_{l=1}^r (1-e^{a_l(z+j)})} = \sum_{m=0}^{\infty} E_m(j,A_r)\frac{z^m}{m!},
\end{equation*}
and similarly for polynomials
\begin{equation*}
    \frac{2^re^{xz}}{\prod_{l=1}^r (1-e^{a_l(z+c)})} = \sum_{m=0}^{\infty} E_m(x,j; A_r)\frac{z^m}{m!},
\end{equation*}
where $c$ is an imaginary constant. Taking $r = 1$, $j = i\pi$, and $A = \langle 1 \rangle$, we get (\ref{1eq}) and (\ref{2eq}). Note that the generalized Euler numbers can be expressed as
\begin{equation*}
    E_m(j, A_r) = \sum_{\substack{l_1 + \cdots + l_r = m \\ 0 \leq l_p \leq m}} \binom{m}{l_1, \cdots ,l_r} E_{l_1}(j,a_1) \cdots E_{l_r}(j, a_r),
\end{equation*}

where the multinomial coefficient is defined as $\binom{m}{l_1, l_2, \cdots , l_k} = \frac{m!}{l_1!l_2! \cdots l_k!}$. In a similar fashion as \cite{LANPHIER2022125716}, the generating function from equation \ref{2eq} implies that for any partition $x = x_1 + \cdots + x_d$ with $0\leq d \leq r$ and $A_1', A_2', \cdots , A_d'$ are a set of vectors that form a partition of $A_r$, we have 

\begin{equation*}
    E_m(x,j;A_r) = \sum_{\substack{l_1 + \cdots + l_d = m \\ 0 \leq l_p \leq m}} \binom{m}{l_1, \cdots , l_l} E_{l_1}(x_1,j;A_1') \cdots E_{l_d} (x_l,j ; A_l').
\end{equation*}

 For a set $S$, let $|S|$ be the number of elements in the set. We may have the following result:

\begin{theorem}
Let $N_r = (n_1,n_2, \cdots n_r) \in \mathbb{Z}_{\geq 0}, A_r = (a_1, a_2, \cdots a_r)  \in \mathbb{Z}_{> 0}^r$, $x \in \mathbb{Z}_{\geq 0}$, $s \in \mathbb{Z}_{>0}$, and $c$ to be a purely imaginary number that is not a integral multiple of $2\pi i$. For a set $S = \{i_1, \cdots i_{|S|} \} \subseteq \{ 1,2, \cdots, r\}$, define $A_s$ to be $(a_{i_1}, a_{i_2} \cdots, a_{i_{|S|})}$; define $N_S$ analogously as $(n_{i_1}, n_{i_2} \cdots, n_{i_{|S|})}$. Let $P_S = A_S \cdot N_S$. Then,
    \begin{equation*}
        \sum_{M_r = 0_r}^{N_r} (A_r 
\cdot M_r+x)^s e^{c(A_r 
\cdot M_r)} = \frac{1}{2^r} \sum_{S \subseteq \{1,2, \cdots r\}} (-1)^{|S|} e^{c( P_S+ A_S \cdot1_{|S|} -x)} E_{s}(P_S+ A_S \cdot 1_{|S|}+x,j;A_r).
    \end{equation*}
    In the case when $c = i\pi$ we attain the generalization of alternating sums of powers:
        \begin{equation*}
        \sum_{M_r = 0_r}^{N_r} (A_r 
\cdot M_r+x)^s(-1)^{A_r 
\cdot M_r} = \frac{1}{2^r} \sum_{S \subseteq \{1,2, \cdots r\}} (-1)^{|S|+ A_S \cdot (N_s+1_{|S|} -x)}  E_{s}(A_S \cdot (N_S+1_{|S|})+x,i\pi;A_r),
    \end{equation*}
where
\begin{equation*}
        \frac{2^re^{xz}}{\prod_{j=1}^r (1-e^{a_rz} (-1)^{a_r} )} = \sum_{m=0}^{\infty} E_m(x,i\pi ; A_r)\frac{z^m}{m!}.
    \end{equation*}
\end{theorem}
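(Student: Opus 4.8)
The plan is to treat $s$ as the expansion variable of an exponential generating function and to reduce the entire identity to the defining generating function of the $E_m(x,j;A_r)$. Concretely, I would multiply the target sum by $z^s/s!$ and sum over $s\ge 0$; since $\sum_{s\ge 0}(A_r\cdot M_r+x)^s z^s/s! = e^{(A_r\cdot M_r+x)z}$, interchanging the (finite) sums gives
\begin{equation*}
\sum_{s\ge 0}\frac{z^s}{s!}\sum_{M_r=0_r}^{N_r}(A_r\cdot M_r+x)^s e^{c(A_r\cdot M_r)} = e^{xz}\sum_{M_r=0_r}^{N_r} e^{(z+c)(A_r\cdot M_r)} = e^{xz}\prod_{l=1}^r\sum_{m_l=0}^{n_l} e^{(z+c)a_l m_l}.
\end{equation*}
Each inner factor is a finite geometric series, so it equals $\bigl(e^{(z+c)a_l(n_l+1)}-1\bigr)/\bigl(e^{(z+c)a_l}-1\bigr)$; this is the step that converts the summation bounds $N_r$ into the shifted arguments appearing in the theorem.

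Next I would expand the product of numerators over subsets. Writing $Q_S:=\sum_{l\in S}a_l(n_l+1)=P_S+A_S\cdot 1_{|S|}$, one has $\prod_{l=1}^r\bigl(e^{(z+c)a_l(n_l+1)}-1\bigr)=\sum_{S\subseteq\{1,\dots,r\}}(-1)^{r-|S|}e^{(z+c)Q_S}$. Splitting $e^{(z+c)Q_S}=e^{cQ_S}e^{zQ_S}$ and absorbing $e^{xz}$, the generating function becomes a sum over $S$ of terms $(-1)^{r-|S|}e^{cQ_S}$ times $e^{(Q_S+x)z}/\prod_{l}\bigl(e^{a_l(z+c)}-1\bigr)$. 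Using $\prod_l\bigl(e^{a_l(z+c)}-1\bigr)=(-1)^r\prod_l\bigl(1-e^{a_l(z+c)}\bigr)$ together with the definition $\frac{2^re^{xz}}{\prod_l(1-e^{a_l(z+c)})}=\sum_m E_m(x,j;A_r)z^m/m!$, each $S$-summand collapses to $\tfrac{1}{2^r}(-1)^{|S|}e^{cQ_S}\sum_{m\ge 0}E_m(Q_S+x,j;A_r)z^m/m!$ (the two factors of $(-1)^r$ cancel). Comparing the coefficient of $z^s/s!$ on both sides then yields the first displayed formula; the second follows by setting $c=i\pi$, so that $e^{c(A_r\cdot M_r)}=(-1)^{A_r\cdot M_r}$, $e^{cQ_S}=(-1)^{A_S\cdot(N_S+1_{|S|})}$, and the denominator collapses to the stated $\prod_j(1-e^{a_jz}(-1)^{a_j})$.

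The main obstacle I anticipate is not the algebra of the subset expansion but the careful bookkeeping of the analytic hypotheses and of the sign/prefactor $e^{cQ_S}$. For the geometric-series step and the Euler-polynomial generating function to be legitimate, each factor $e^{a_l(z+c)}-1$ must be nonzero at $z=0$, i.e. $a_l c\notin 2\pi i\mathbb{Z}$ for every $l$; this is precisely where the hypothesis that $c$ (and hence each $a_l c$) avoids integral multiples of $2\pi i$ enters, guaranteeing that all series involved are genuine Taylor expansions about $z=0$ so that coefficient extraction is valid. I would also carefully track the exponential prefactor, checking whether the chosen normalization of $E_m$ absorbs a global factor of $e^{-cx}$ so as to produce the argument $e^{c(P_S+A_S\cdot 1_{|S|}-x)}$ exactly as written; once this is pinned down, the coefficient comparison is immediate and both displayed formulas follow.
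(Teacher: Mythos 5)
Your proof is correct, and it takes a genuinely different route from the paper's. The paper proceeds analytically: it introduces the multivariable Lerch--Hurwitz-type zeta function $\zeta_E(s,c,j;A_r)$, gives it a Hankel-contour integral representation (Lemma \ref{lemma1}), shows that its values at negative integers are the generalized Euler polynomials by picking off the residue at the origin (Lemma \ref{lemma2}), and then uses inclusion--exclusion to write the finite multi-sum as an alternating sum of infinite tails, each a shifted zeta value, finishing by analytic continuation to $s=-m$. Your argument replaces all of this with a single exponential-generating-function computation: pairing the left side against $z^s/s!$ turns the finite multi-sum into a product of finite geometric series, the subset expansion of the product of numerators plays exactly the role of the paper's inclusion--exclusion, and coefficient extraction against the defining generating function of $E_m(\cdot,j;A_r)$ closes the argument. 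Your route buys a fully elementary proof of a finite identity with no convergence or continuation issues (the paper's tail sums converge only for $\mathrm{Re}(s)$ large, and the passage to $s=-m$ is asserted rather than justified); what it forgoes is the zeta-function machinery, which the paper reuses for Theorems 3 and 4. You are also right that the hypothesis actually needed is $a_l c\notin 2\pi i\mathbb{Z}$ for every $l$ (so that each denominator $1-e^{a_l(z+c)}$ is nonzero at $z=0$ and the $E_m$ are even defined), which is stronger than the stated condition on $c$ alone.

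One substantive point: the prefactor question you flag at the end resolves in your favor. Your computation yields $e^{cQ_S}$ with $Q_S=P_S+A_S\cdot 1_{|S|}$ and no factor of $e^{-cx}$, whereas the theorem as printed has $e^{c(Q_S-x)}$. Testing $r=1$, $a_1=1$, $N_r=0_r$, where the left side is just $x^s$, one checks directly from the generating function that $\tfrac12\bigl(E_s(x,c)-e^{c}E_s(1+x,c)\bigr)=x^s$, which is your version; the printed formula would instead give $e^{-cx}x^s$. Your version is also the one consistent with the paper's own corollary to Theorem 1, whose prefactor is $e^{j(A_S\cdot 1_{|S|})}$ with no $-x$. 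So the extra $e^{-cx}$ in the theorem statement is an error in the paper, not something your normalization needs to absorb, and your coefficient comparison can stand as written.
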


\begin{corollary}
    Setting $N_r = 0_r,$ we have that
    \begin{equation*}
     \sum_{S \subseteq \{1,2, \cdots r\}} (-1)^{|S|} e^{j( A_S \cdot1_{|S|})} E_{m}( A_S \cdot 1_{|S|}+x,j;A_r) =2^r x^m.
    \end{equation*}
\end{corollary}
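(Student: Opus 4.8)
The plan is to prove the identity by passing to the exponential generating function in $z$ and exploiting the multiplicative structure of the defining generating function for $E_m(\,\cdot\,,j;A_r)$. Equivalently, one may view the statement as the specialization $N_r=0_r$ of the Theorem: when every $n_l=0$ the multi-sum on the left collapses to the single term $M_r=0_r$, so that side reduces to $x^s$, and every $P_S=A_S\cdot N_S$ vanishes. I prefer the generating-function route because it makes the cancellation transparent and avoids bookkeeping with the $P_S$ terms.

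First I would multiply the claimed identity by $z^m/m!$ and sum over $m\ge 0$, forming
\[
\sum_{m=0}^{\infty}\Big(\sum_{S\subseteq\{1,\dots,r\}}(-1)^{|S|}e^{j(A_S\cdot 1_{|S|})}E_m(A_S\cdot 1_{|S|}+x,\,j;A_r)\Big)\frac{z^m}{m!}.
\]
Since the subset sum is finite, I would interchange the two summations freely and apply the defining generating function
\[
\sum_{m=0}^{\infty}E_m(y,j;A_r)\frac{z^m}{m!}=\frac{2^r e^{yz}}{\prod_{l=1}^r\big(1-e^{a_l(z+j)}\big)}
\]
with $y=A_S\cdot 1_{|S|}+x$. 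Factoring out the part independent of $S$ leaves the scalar factor
\[
\frac{2^r e^{xz}}{\prod_{l=1}^r\big(1-e^{a_l(z+j)}\big)}\sum_{S\subseteq\{1,\dots,r\}}(-1)^{|S|}e^{(z+j)(A_S\cdot 1_{|S|})}.
\]

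The crux is then the subset identity. Writing $A_S\cdot 1_{|S|}=\sum_{i\in S}a_i$, each summand factors as $\prod_{i\in S}\big(-e^{(z+j)a_i}\big)$, so the subset sum is exactly the expanded product
\[
\sum_{S\subseteq\{1,\dots,r\}}\prod_{i\in S}\big(-e^{(z+j)a_i}\big)=\prod_{l=1}^r\big(1-e^{a_l(z+j)}\big).
\]
This product cancels the denominator precisely, collapsing the generating function to $2^r e^{xz}=\sum_{m\ge 0}2^r x^m\,z^m/m!$. Comparing the coefficient of $z^m/m!$ on both sides then yields $\sum_S(-1)^{|S|}e^{j(A_S\cdot 1_{|S|})}E_m(A_S\cdot 1_{|S|}+x,j;A_r)=2^r x^m$, as claimed.

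The computation is mostly routine; the one genuine step is recognizing that the alternating subset sum factors into $\prod_{l=1}^r\big(1-e^{a_l(z+j)}\big)$ and cancels the denominator of the generating function. The only points requiring care are the consistent identification of the constant $c$ in the generating function with $j$, and verifying that the factor $e^{(z+j)(A_S\cdot 1_{|S|})}$ really splits over the coordinates of $A_S$; both are immediate once the exponent $(z+j)\sum_{i\in S}a_i$ is written out.
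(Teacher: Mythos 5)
Your argument is correct, but it is not the route the paper takes. The paper gives no separate proof of the corollary: it is simply the specialization $N_r=0_r$ of Theorem 1, under which the multi-sum on the left collapses to the single term $x^s$ and every $P_S=A_S\cdot N_S$ vanishes --- precisely the reduction you describe and then set aside in your opening paragraph. Your generating-function proof is genuinely different and self-contained: the key step, recognizing that $\sum_{S\subseteq\{1,\dots,r\}}(-1)^{|S|}e^{(z+j)(A_S\cdot 1_{|S|})}=\sum_S\prod_{i\in S}\bigl(-e^{(z+j)a_i}\bigr)=\prod_{l=1}^r\bigl(1-e^{a_l(z+j)}\bigr)$ cancels the denominator of $\sum_m E_m(y,j;A_r)z^m/m!$, is a finite formal-power-series manipulation requiring none of the analytic machinery behind Theorem 1 (the contour-integral representation and analytic continuation of $\zeta_E$ in Lemmas \ref{lemma1} and \ref{lemma2}) and none of its hypotheses on $c$, $s$, or $A_r$. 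It also serves as an independent consistency check: your computation produces exactly $2^rx^m$, matching the classical identity $E_m(x)+E_m(x+1)=2x^m$ in the case $r=1$, $a_1=1$, $j=i\pi$, whereas a literal specialization of the displayed formula in Theorem 1 would drag along a stray factor $e^{-cx}$ in the exponent; your derivation confirms that the corollary as stated carries the correct normalization. What the paper's route buys is that the corollary is free once Theorem 1 is established; what yours buys is an elementary, restriction-free proof and a sanity check on the constants in Theorem 1.
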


One limitation for Theorem 1 is that the exponent $m$ in $(A_r \cdot M_r+x)^m$ is an integer. There exists various formulations/approximations to compute arithmetic progressions of non-integral powers; for example, Parks \cite{PARKS2004343} demonstrates asymptotic formulas to compute the sums of non integral powers of the first $n$ positive integers. To rectify for this issue, we explore another class of values.

Let $k$ be a positive integer, $a$ be a nonzero integer that is not a multiple of $k$, and $x\in \mathbb{C}$. Define the polynomials $C_{n,k}(x;a)$ and $\tilde{C}_{n,k}(x;a)$ as
    \begin{equation*}
        C_{n,k}(x;a) = \sum_{l=0}^{k-1} B_n(x- \frac{l}{k})  e^{2i\pi a l/k}
    \end{equation*}
    
    \begin{equation*}
        \tilde{C}_{n,k}(x;a) = \sum_{l=0}^{k-1} \tilde{B}_n(x- \frac{l}{k}) e^{2i\pi a l/k},
    \end{equation*}
    where $B_k(x)$ is the $k$th Bernoulli polynomial and $\tilde{B}_k(x) = B_k(\{x\})$ is the periodic Bernoulli function. Furthermore, let $C_{n,k}(a) = C_{n,k}(0,a)$. Applying equation \ref{bernNum}, one can see that $C_{n.k}(x;a)$ admits the generating function
    \begin{equation}\label{genfun}
            \sum_{n=0}^{\infty} C_{n,k}(x;a) \frac{z^n}{n!}
            = \left( \frac{ze^{xz}}{e^z-1} \right) \left( \frac{e^{-z}-1}{e^{2ia\pi/k}e^{-z/k}-1} \right),
    \end{equation}
   Through these values, we may achieve a formula similar to the Euler-Maclaurin Summation formula:
\begin{theorem}\label{theorem2}
    Let $f: \mathbb{R} \rightarrow \mathbb{C}$ be a $q-1$ differentiable function over $[m,n]$, where $m,n \in \mathbb{Z}$ and $m < n$. Then
   \begin{equation}
          \sum_{r = m}^n  \sum_{l=1}^{k} e^{2i\pi al/k} f(r + l/k) = \sum_{l=1}^q  \frac{C_{l,k}(a)(-1)^l}{l!} \left[f^{(l-1)}(n) - f^{(l-1)}(m) \right] + R 
\end{equation}
where $R = \frac{(-1)^{q+1}}{q!} \int_m^n \tilde{C}_{q,k}(x) f^{(q)}(x) dx$. As an alternative formulation, let $g: \mathbb{R} \rightarrow \mathbb{C}$ be a $q-1$ differentiable function over $[mk, nk]$. Then
\begin{equation}
    \sum_{r = mk+1}^{nk} e^{2i\pi a r/k} g(r) = \sum_{l=1}^q \frac{C_{l,k}(a)(-1)^lk^{l-1}}{l!} \left[g^{(l-1)}(kn) - f^{(l-1)}(km) \right] + R,
\end{equation}
where $R = \frac{(-1)^{q+1}k^q}{q!}. \int_{km}^{kn} \tilde{C}_{q,k}(x) f^{(q)}(x) dx.$
\end{theorem}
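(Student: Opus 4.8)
The plan is to prove the first identity by writing the remainder $R$ as an integral against the periodic kernel $\tilde{C}_{q,k}$ and integrating by parts $q$ times, peeling off one boundary term at each step. The engine of the argument is the differentiation rule inherited from the periodic Bernoulli functions: since $\tilde{B}_n'(x) = n\tilde{B}_{n-1}(x)$ for $n \geq 2$ away from the integers, term-by-term differentiation of the definition gives $\frac{d}{dx}\tilde{C}_{n,k}(x;a) = n\,\tilde{C}_{n-1,k}(x;a)$ for $n \geq 2$. A second fact I will use repeatedly is that $\tilde{C}_{0,k}(x;a) = \sum_{l=0}^{k-1} e^{2i\pi al/k} = 0$, which holds precisely because $a$ is a nonzero integer not divisible by $k$ (the geometric series vanishes); this is exactly the hypothesis that makes the whole construction work.

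First I would carry out the telescoping over the ``smooth'' range $2 \leq l \leq q$. Applying integration by parts to $R = \frac{(-1)^{q+1}}{q!}\int_m^n \tilde{C}_{q,k}(x;a)f^{(q)}(x)\,dx$ and using the differentiation rule produces a boundary term $\frac{(-1)^{q+1}}{q!}\bigl[\tilde{C}_{q,k}(x;a)f^{(q-1)}(x)\bigr]_m^n$ together with an integral of the same shape but with $q$ replaced by $q-1$; a short sign check shows the latter is exactly the remainder of order $q-1$. Because $\tilde{C}_{n,k}$ is continuous for $n \geq 2$ and periodic of period $1$, its values at the integer endpoints $m$ and $n$ coincide and equal the constant obtained by evaluating at $0$; I would identify this constant with $C_{l,k}(a)$, using the reflection formula $B_n(1-x) = (-1)^n B_n(x)$ together with periodicity to match the two conventions and fix the sign. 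Iterating down to $l=2$ yields $\sum_{l=2}^q \frac{(-1)^l C_{l,k}(a)}{l!}\bigl[f^{(l-1)}(n) - f^{(l-1)}(m)\bigr]$ plus the order-$1$ remainder.

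The base case $l=1$ is where the sum on the left is generated, and it is the main obstacle. Here $\tilde{C}_{1,k}$ is no longer continuous: since $\tilde{B}_1(y) = \{y\} - \tfrac12$ has unit slope but jumps down by $1$ at each integer, the weighted combination $\tilde{C}_{1,k}$ has slope $\tilde{C}_{0,k} = 0$ and is therefore a step function, with a jump of $-e^{2i\pi ap/k}$ at each lattice point $x = p/k$. Consequently its distributional derivative is $\tilde{C}_{1,k}'(x;a) = -\sum_{p} e^{2i\pi ap/k}\,\delta(x - p/k)$, and integrating $\int_m^n \tilde{C}_{1,k}(x;a)f'(x)\,dx$ by parts converts the kernel into a Dirac comb on $\tfrac1k\mathbb{Z}$; pairing it against $f$ recovers precisely $\sum_p e^{2i\pi ap/k} f(p/k)$, which after reindexing $p = kr + l$ is the double sum $\sum_{r=m}^n \sum_{l=1}^k e^{2i\pi al/k} f(r+l/k)$ on the left-hand side. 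The delicate point is the endpoint bookkeeping: the jumps of $\tilde{C}_{1,k}$ sit exactly at the integer endpoints, so I must track the one-sided limits and the Dirac masses located at $x=m$ and $x=n$ and show they reorganize into the remaining boundary term $-C_{1,k}(a)\bigl[f(n)-f(m)\bigr]$ while correcting the summation range from $\{km,\dots,kn\}$ to $\{km+1,\dots,k(n+1)\}$ (the extra endpoints carry weight $e^{2i\pi am}=1$, which is what makes the reorganization consistent). Verifying this endpoint accounting, together with pinning down every sign, is the part that demands the most care.

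Finally, the second formulation follows from the first with no new work by the substitution $f(x) = g(kx)$: the chain rule gives $f^{(l-1)}(x) = k^{l-1} g^{(l-1)}(kx)$, which inserts the factors $k^{l-1}$ into the boundary sum and $k^q$ into the remainder, and the lattice $\tfrac1k\mathbb{Z}$ for $f$ becomes the integer lattice for $g$, turning $\sum_{l=1}^k e^{2i\pi al/k} f(r+l/k)$ into $e^{2i\pi ar/k} g(r)$ over the corresponding range of integers.
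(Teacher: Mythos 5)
Your proposal is correct and is essentially the paper's proof run in reverse: the paper first establishes the $l=1$ identity by exploiting that $\tilde{C}_{1,k}(x;a)$ is piecewise constant on the intervals $(\tfrac{j-1}{k},\tfrac{j}{k})$ (precisely because $\tilde{C}_{0,k}=\sum_{l}e^{2i\pi al/k}=0$), telescopes over unit intervals, and then integrates by parts upward to generate the higher-order boundary terms, whereas you start from the order-$q$ remainder, integrate by parts downward via $\tfrac{d}{dx}\tilde{C}_{n,k}=n\tilde{C}_{n-1,k}$, and package the bottom step as the distributional derivative (Dirac comb) of the same step function. The two facts you isolate as the engine --- the vanishing of $\tilde{C}_{0,k}$ and the jumps of weight $-e^{2i\pi ap/k}$ at the points $p/k$ --- are exactly the mechanism of the paper's argument, and the endpoint bookkeeping you flag as delicate is a real issue in the statement itself (the range $\sum_{r=m}^{n}$ in the first display is off by one relative to the telescoping over $[m,n]$ and to the second display's range $\sum_{r=mk+1}^{nk}$), so resolving it carefully, as you propose, is warranted rather than optional.
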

Making further use of $C_{n,k}(x;a),$ we define $C_{m,k}^*(a) = C_{m,k}(a)a^{m-1}$ and extend the input to $r$-dimensions to $A_r$ by
    \begin{equation*}
        C_{m,k}^*(A_r) = \sum_{\substack{l_1 + \cdots + l_r = m \\ 0 \leq l_p \leq m}} \binom{m}{l_1, \cdots, l_r}  C_{l_1,k}^*(a_1) \cdots C_{l_r,k}^*(a_r).
    \end{equation*}
From equation (\ref{genfun}), one may derive that
\begin{equation*}
    \sum_{n=0}^{\infty} C_{n,k}^*(A_r) \frac{z^n}{n!} = \prod_{p=1}^r \frac{z}{e^{a_pz}-1} * \frac{e^{-a_p z}-1}{e^{2ia_p \pi/k}*e^{-a_p z/k} -1}.
\end{equation*}
 Furthermore, let $C_{s,m,k}^*(x;A_r)$ as
    \begin{equation*}
        C_{s,m,k}^*(x;A_r) = \sum_{j=0}^m (-k)^j\binom{s}{j}C_{j,k}^*(A_r)x^{s-j}.
    \end{equation*}

For $s \in \mathbb{C},$ $x \in \mathbb{R}_{\geq 0}$ the Euler-Zeta function, also known as the Dirichlet ETA function, is defined as
    \begin{equation}
\zeta_E(s,x) = 2\sum_{n=0}^{\infty} \frac{(-1)^n}{(n+x)^s}.
\end{equation}
Euler numbers and Euler polynomials share an intimate relationship with the zeta function, as seen in \cite{kim2005note}. Again, we generalize the Euler-Zeta function in the following manner: let $A_r \in \mathbb{Z}_{>0 0}^r$ for a positive integer $r$, and let $c \in \mathbb{R}$ such that $c \geq 0$. We define the generalized Euler-Zeta function as
\begin{equation*}
    \zeta_E(s,x,c;A_r) := 2^r \sum_{M_r \in \mathbb{Z}_{\geq 0}^r} \frac{e^{c(M_r \cdot A_r})}{(M_r\cdot A_r+x)^s},
\end{equation*}
where $c$ is an imaginary constant. Now, let $(a)_r = \Gamma(a+r)/\Gamma(a)$, where $(a)_0 = 1$; this is commonly known as the Pochhammer symbol. Given this machinery, we have the following result:

\begin{theorem}
    Let $Re(s) > -1$, $k \in \mathbb{Z}_{> 1}$, $A_r = (a_1, \cdots, a_r) \in \mathbb{Z}_{> 0}^r$ where $k \nmid a_i$, $x \in \mathbb{R}_{\geq 0}$ and $q = \lceil Re(s) \rceil $. Define $P_S = A_S \cdot N_S$. We have
\begin{equation*}
    \begin{split}
        \sum_{M_r = 0_r}^{N_r} (A_r \cdot M_r+x)^s * e^{\frac{2\pi i}{k}(A_r \cdot M_r)} &= \frac{1}{k^{r-1} (s+2)_{r-1}} \sum_{\substack{S \subseteq \{1,2, \cdots r\} \\ S \neq \emptyset}} (-1)^{|S|}e^{\frac{2\pi i}{k}(P_S+ A_S \cdot 1_{|S|}))} C_{s+r,q,k}^* (P_S+x;A_r) \\
        & + \frac{1}{2^r} \zeta_E(-s, x; A_r) + O(N_{min}^{Re(s)-q+2}).
    \end{split}
\end{equation*}
\end{theorem}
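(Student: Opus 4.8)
The plan is to derive the finite sum from the Euler--Maclaurin-type identity of Theorem \ref{theorem2}, applied one index at a time, and then to recognize the analytically continued Euler--Zeta value as the leftover constant term. First I would fix all but one index, say $m_l$, and regard the summand $(A_r\cdot M_r+x)^s e^{2\pi i(A_r\cdot M_r)/k}$ as $e^{2\pi i a_l m_l/k}\,g(m_l)$ with $g(t)=(a_l t+c)^s$, where $c$ collects the remaining (positive) terms together with $x$. Since $A_r\in\mathbb Z_{>0}^r$ and $x\ge 0$, the base $a_l t+c$ is positive on the summation range, so $g$ is as smooth as required and $g^{(j)}(t)=a_l^{\,j}\,s(s-1)\cdots(s-j+1)(a_l t+c)^{s-j}$. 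The hypothesis $k\nmid a_l$ guarantees $a_l/k\in\mathbb Q\setminus\mathbb Z$, which is exactly the nontriviality of the root of unity needed for Theorem \ref{theorem2}; in particular the geometric sum $C_{0,k}(a_l)=\sum_{\ell=0}^{k-1}e^{2\pi i a_l\ell/k}$ vanishes, so the would-be leading term of order $s+1$ is suppressed and the genuine growth has order $s$, matching the stated formula.

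Applying the second formulation of Theorem \ref{theorem2} introduces, for each variable, boundary data at the two endpoints together with a remainder integral. The derivative factors $a_l^{\,j}\,s(s-1)\cdots(s-j+1)$ combine with the coefficients $\tfrac{C_{l,k}(a_l)(-1)^l k^{l-1}}{l!}$: the power $a_l^{\,l-1}$ turns $C_{l,k}(a_l)$ into the starred quantity $C_{l,k}^*(a_l)$, the binomials $\binom{\,\cdot\,}{j}$ together with the $(-k)^j$ assemble into the polynomials $C^*_{\bullet,q,k}(\,\cdot\,;A_r)$, and the multinomial identity defining $C^*_{m,k}(A_r)$ merges the single-variable factors across the $r$ indices. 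I would organize the $r$-fold iteration through the inclusion--exclusion $\sum_{m_l=0}^{n_l}=\sum_{m_l=0}^{\infty}-\sum_{m_l=n_l+1}^{\infty}$, so that expanding the product over the $r$ variables produces a sum over subsets $S\subseteq\{1,\dots,r\}$ with sign $(-1)^{|S|}$, in which $S$ indexes the variables evaluated at their upper endpoint (generating the $N$-dependence via $P_S=A_S\cdot N_S$ and the phase $e^{2\pi i(P_S+A_S\cdot 1_{|S|})/k}$) while the complementary variables are summed to infinity.

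The term $S=\varnothing$ is then the full infinite multisum, which by the definition of the generalized Euler--Zeta function is precisely $\tfrac1{2^r}\zeta_E(-s,x;A_r)$. For $S\neq\varnothing$, the $r-1$ iterated integrations over the non-boundary variables each raise the exponent by one, which accounts simultaneously for the shift from $s$ to $s+r$ inside $C^*_{s+r,q,k}$ and for the prefactor $\tfrac1{k^{r-1}(s+2)_{r-1}}$, the rising factorial arising from the cascade $s+2,s+3,\dots,s+r$; the boundary evaluations at the upper endpoints supply the argument $P_S+x$. The error term is extracted by truncating the expansion at order $q=\lceil Re(s)\rceil$ and bounding the remainder $R$ of Theorem \ref{theorem2}: the $q$-th derivative lowers the exponent to $Re(s)-q\in(-1,0]$, and integrating the resulting tail contributes the stated $O(N_{min}^{Re(s)-q+2})$.

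The hard part will be making the appearance of $\zeta_E(-s,x;A_r)$ rigorous as a genuine constant. For $Re(s)>-1$ the series defining $\zeta_E(-s,\cdots)$ does not converge absolutely, so both the $S=\varnothing$ term and the endpoint-at-infinity boundary terms thrown off by Theorem \ref{theorem2} are meaningful only through analytic continuation. The oscillation furnished by the nontrivial roots of unity $e^{2\pi i a_l/k}$ is exactly what renders the continued sum finite on $Re(s)>-1$; I would therefore first establish the identity in the region $Re(s)<-1$, where every integral and tail converges absolutely and the boundary terms at infinity vanish outright, and then continue in $s$, checking that the truncated expansion and its remainder are holomorphic in $s$ on the relevant strip so that the formula persists to $Re(s)>-1$ with the infinite boundary contributions set to zero. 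Verifying this holomorphy, and confirming that the inclusion--exclusion recombination into the compact $C^*_{s+r,q,k}(P_S+x;A_r)$ respects the multinomial partition structure uniformly in $N_r$, is the technical heart of the argument; the remaining bookkeeping of phases, signs, and prefactors is routine once this is in place.
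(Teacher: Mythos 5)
Your proposal follows essentially the same route as the paper: inclusion--exclusion over subsets $S$ to reduce the finite multisum to shifted infinite tails, iterated application of the Euler--Maclaurin analog of Theorem 2 one variable at a time (the paper packages this iteration as the operators $\phi_{1,a}$ and $\phi_{2,a}$ and records the resulting tail asymptotic as Theorem 4 before substituting), identification of the $S=\emptyset$ term with $\tfrac{1}{2^r}\zeta_E(-s,x;A_r)$ by analytic continuation, and a remainder bound yielding the $O(N_{min}^{Re(s)-q+2})$ error. The only difference is organizational -- the paper proves the asymptotic expansion of the zeta tails first and then plugs it into the inclusion--exclusion identity, whereas you interleave the two steps -- so the mathematical content coincides.
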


\begin{theorem}
For $Re(s) > -1$, $k \in \mathbb{Z}_{> 1}$, $A_r = (a_1, \cdots, a_r) \in \mathbb{Z}_{> 0}^r$ where $k \nmid a_i$, $x \in \mathbb{R}_{\geq 0}$ and $q = \lceil Re(s) \rceil $, we have
    \begin{equation*}
     \zeta_E(-s,x,\frac{2 \pi i}{k};A_r) =  \frac{2^r}{k^{r-1}(s+2)_{r-1}}  C_{s+r,q,k}^* (x - A_r \cdot 1_r; A_r) + O(x^{Re(s)-q+2}).
\end{equation*}
\end{theorem}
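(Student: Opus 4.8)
The plan is to recognize the left–hand side as an infinite multi-sum and then obtain it as the $N_r \to \infty$ limit of the finite-sum asymptotic already established in Theorem 3. First I would unwind the definition of the generalized Euler–Zeta function to write
\[
\frac{1}{2^r}\,\zeta_E\!\left(-s,\,x,\,\tfrac{2\pi i}{k};\,A_r\right) \;=\; \sum_{M_r = 0_r}^{\infty} (A_r\cdot M_r + x)^s\, e^{\frac{2\pi i}{k}(A_r\cdot M_r)},
\]
understood via analytic continuation of $\zeta_E(\sigma,x,\tfrac{2\pi i}{k};A_r)$ from the half-plane $\mathrm{Re}(\sigma)>r$ (where the defining Dirichlet series converges absolutely) to $\sigma=-s$. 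The hypothesis $k\nmid a_i$ is what makes this continuation clean: it forces $C_{0,k}(a_i)=\sum_{l=0}^{k-1}e^{2\pi i a_i l/k}=0$, so that $C^*_{0,k}(a_i)=0$ and no non-oscillating ``constant'' term obstructs the procedure. Thus the object of Theorem 4 is exactly the quantity whose finite truncations sit on the left of Theorem 3.

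Conceptually I would then pass to the limit $N_r\to\infty$ in Theorem 3. The left-hand side tends to $\tfrac{1}{2^r}\zeta_E(-s,x,\tfrac{2\pi i}{k};A_r)$, and on the right I would argue that every boundary term indexed by a nonempty subset $S$ that reaches the upper endpoint in some coordinate is regularized to zero: in the region of absolute convergence the relevant derivatives of $(t+x)^s$ decay at the upper endpoint, so the terms $C^*_{s+r,q,k}(P_S+x;A_r)$ vanish as $P_S\to\infty$, and this vanishing persists under continuation in $s$. The only surviving contribution is the one in which every coordinate sits at its lower endpoint; each such endpoint produces a shift of $-a_p$, so the $r$ coordinates jointly manufacture the shift $-A_r\cdot 1_r$, collapsing the subset sum into the single term $\frac{2^r}{k^{r-1}(s+2)_{r-1}}C^*_{s+r,q,k}(x-A_r\cdot 1_r;A_r)$. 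Here I would invoke the multiplicativity of the generating function for $C^*_{n,k}(A_r)$ (the displayed product over $p=1,\dots,r$) to check that the iterated lower-endpoint evaluation really does reassemble into this one $C^*$ polynomial.

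To keep the bookkeeping honest, the route I would actually write out is the direct one: apply the Euler–Maclaurin analog of Theorem 2 (second formulation) to $(A_r\cdot M_r+x)^s$ regarded as a function of $m_p$, summing each $m_p$ from $0$ to $\infty$ in turn. Each pass produces lower-boundary terms carrying the factor $C_{l,k}(a_p)(-1)^l k^{l-1}/l!$ times a derivative of $(t+x)^s$ evaluated at the lower endpoint (a power of $x$ shifted by $-a_p$), plus an integral remainder. The radial integration over the $r$ coordinates is what produces the Pochhammer factor $(s+2)_{r-1}$ and raises the effective degree from $s$ to $s+r$, while the $k$'s collect into $k^{r-1}$; matching this against $C^*_{s+r,q,k}(x-A_r\cdot 1_r;A_r)=\sum_j(-k)^j\binom{s+r}{j}C^*_{j,k}(A_r)(x-A_r\cdot 1_r)^{s+r-j}$ (with the $j=0$ term dropping because $C^*_{0,k}=0$) recovers the main term. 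The accumulated remainders $\tfrac{(-1)^{q+1}k^q}{q!}\int_x^\infty \tilde C_{q,k}(t)\,g^{(q)}(t)\,dt$ are then estimated to give the stated $O(x^{\mathrm{Re}(s)-q+2})$.

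The main obstacle is the regularization: justifying that the boundary contributions at the infinite endpoint may be set to zero and that the $N_r\to\infty$ limit commutes with the continuation in $s$. The safest way is to run the whole argument inside the region of absolute convergence, where all the sums, the limit, and the endpoint estimates are literal, establish the identity there, and only then continue in $s$, using that the right-hand side is a finite combination of meromorphic functions of $s$ (powers of $x$ with coefficients built from the $C_{j,k}$ and Pochhammer symbols). A secondary but genuinely delicate point is controlling the remainder integral to the claimed exponent: since $\tilde C_{q,k}$ is bounded and periodic while $g^{(q)}(t)=O(t^{\mathrm{Re}(s)-q})$ decays too slowly for naive absolute convergence, one must exploit the mean-zero oscillation of $\tilde C_{q,k}$ (integrating by parts against its periodic antiderivative) to obtain both convergence and the stated error bound.
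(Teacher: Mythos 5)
Your ``direct route'' --- iterating the Euler--Maclaurin analog of Theorem 2 coordinate-by-coordinate over the infinite multi-sum, collecting the pure boundary-term contribution into the single $C^*$ polynomial with the $k^{r-1}(s+2)_{r-1}$ factor, bounding the remainder contributions, and then continuing analytically from the region of absolute convergence --- is essentially the paper's own proof, which formalizes exactly this via the operators $\phi_{1,a}$ and $\phi_{2,a}$. The only caveats: your first-sketched alternative (deducing Theorem 4 as the $N_r\to\infty$ limit of Theorem 3) would be circular here, since the paper derives Theorem 3 \emph{from} Theorem 4; and your point that the remainder integral requires the mean-zero oscillation of $\tilde C_{q,k}$ rather than naive absolute bounds is a legitimate refinement that the paper itself glosses over.
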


These results can be useful for computations. As an example, suppose $r = 2$, $A_r = (1,3)$, $N_r = (100,150)$, and $m =2$. Applying Theorem 1, we may find that
    \begin{equation*}
        \begin{split}
              \sum_{(n_1,n_2) = (0,0)}^{(100,150)} (3n_1+n_2)^2 *(-1)^{3m+n} &= \frac{1}{4} (E_2(0,i\pi, A_r)E_2(151,i\pi, A_r)) \\
              & + E_2(303,i\pi, A_r) + E_2(484,i\pi, A_r)) \\
                &= 79275,
        \end{split}
    \end{equation*}
where the last line is computed by $   E_2(x,i\pi; A_r) =  6 - 16x +4x^2$ as generated by Wolfram Mathematica.

\section{Proof of Theorem 1}
\noindent \indent To prove theorem 1, we must first establish some properties concerning the class of multivariable zeta functions and Euler polynomials. Let $P$ be the path of integration that trails from positive infinity down the positive axis, hovering above by $\epsilon > 0$, circles counterclockwise around the origin by a radius of $\delta$ until it is below the positive real axis by $\epsilon$, and trails back to infinity.We extend $\zeta_E$ to evaluate for negative $s$:

\begin{lemma}\label{lemma1}
     Let $c \in \mathbb{R}_{\geq 0}$, $j \in i\mathbb{R}$, and $A_r = (a_1, \cdots , a_r) \in \mathbb{C}^r$. Then $\zeta_E$ may be extended for $s < 0$ by the integral formulation
    \begin{equation*}
   \zeta_E(s,c,j;A_r) = \frac{\Gamma(1-s)}{2\pi i} \oint_P e^{-cz}(-z)^{s} \prod_{i=1}^r \frac{2}{1-e^{-a_i(z+j)}} \frac{dz}{z},
\end{equation*}
where $\Gamma(s) = \int_{0}^{\infty} e^{-z}z^{s-1} dz$ is the gamma function. Note that the choice of $\delta$ and $\epsilon$ may be arbitrary as the choice of $\delta$ and $\epsilon$ is arbitrary as changing these parameters preserves homotopy on the domain of $\zeta_E$.

\end{lemma}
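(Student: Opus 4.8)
The plan is to adapt Riemann's classical Hankel--contour derivation of the analytic continuation of the Hurwitz zeta function to the present multivariable Euler--zeta. Throughout, I read the arguments so that $c$ plays the role of the shift (the ``$x$'' of the defining Dirichlet series) and $j$ plays the role of the exponential multiplier, so that for $\Re(s)$ large enough the series
$$\zeta_E(s,c,j;A_r) = 2^r\sum_{M_r\in\mathbb{Z}_{\ge 0}^r}\frac{e^{j(M_r\cdot A_r)}}{(M_r\cdot A_r + c)^s}$$
converges absolutely. The endgame is to produce a contour integral that (i) equals this series where the series converges and (ii) is analytic in $s$ on a domain reaching $s<0$; the identity theorem then performs the continuation.

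First I would pass to a Mellin/gamma integral. Applying $\lambda^{-s} = \frac{1}{\Gamma(s)}\int_0^\infty z^{s-1}e^{-\lambda z}\,dz$ to each summand with $\lambda = M_r\cdot A_r + c$, interchanging sum and integral by absolute convergence (Fubini), and summing the $r$ resulting geometric series coordinatewise --- legitimate on the positive real $z$-axis because $j$ is imaginary and the $a_i$ have positive real part, so $|e^{(j-z)a_i}|<1$ --- yields
$$\Gamma(s)\,\zeta_E(s,c,j;A_r) = \int_0^\infty z^{s-1}F(z)\,dz,\qquad F(z)=e^{-cz}\prod_{i=1}^r\frac{2}{1-e^{-a_i(z+j)}},$$
the product matching the lemma (the sign of the imaginary parameter $j$ being a matter of convention). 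Here the hypothesis that $a_i j$ is not an integral multiple of $2\pi i$ guarantees $1-e^{-a_i j}\neq 0$, so $F$ is analytic at the origin; its poles sit at $z=-j+2\pi i n/a_i$, away from $0$.

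Next I would rewrite $z^{s-1}\,dz = z^{s}\,\frac{dz}{z}$ and wrap the contour around the positive real axis, replacing $z^s$ by the branch $(-z)^s$ cut along $\mathbb{R}_{\ge 0}$ --- this is exactly the contour $P$. Splitting $P$ into the incoming upper ray, the circle of radius $\delta$, and the outgoing lower ray, the factor $(-z)^s$ contributes phases $e^{-i\pi s}$ and $e^{+i\pi s}$ on the two rays while $\frac{dz}{z}\approx \frac{dr}{r}$ is common to both; the circle contributes $O(\delta^{\Re(s)})$ because $F$ is bounded near $0$, which vanishes as $\delta\to 0$ once $\Re(s)>0$. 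Combining the rays gives $\left(e^{i\pi s}-e^{-i\pi s}\right)\int_0^\infty r^{s-1}F(r)\,dr = 2i\sin(\pi s)\,\Gamma(s)\,\zeta_E(s,c,j;A_r)$. The reflection identity $\sin(\pi s)\,\Gamma(s)=\pi/\Gamma(1-s)$ then converts this into precisely the claimed formula, including the $\Gamma(1-s)/(2\pi i)$ prefactor and the $(-z)^s\,dz/z$ integrand.

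Finally, the continuation and the stated homotopy invariance. The right-hand contour integral is analytic in $s$ on all of $\mathbb{C}$: the integrand is entire in $s$, it is integrable near $0$ since $F$ is analytic there, and it decays at $+\infty$ because $F(z)\to 2^r e^{-cz}$ (each Euler factor tends to $2$ as $\Re(z)\to\infty$), so convergence on the rays is controlled by $e^{-cz}$. Hence the contour integral furnishes the analytic continuation of $\zeta_E$ to $s<0$. Independence of the value from $\delta$ and $\epsilon$ is Cauchy's theorem: two admissible contours cobound a region containing no pole of $(-z)^sF(z)/z$ and crossing no branch cut, so the integrals agree. I expect the main obstacle to be the careful branch/phase bookkeeping on the Hankel contour together with the decay estimate at infinity --- this is where the signs, the exact form $(-z)^s\,dz/z$, and the roles of $c\ge 0$ and $\Re(a_i)>0$ must all be pinned down --- and a secondary point is choosing $\delta$ small enough to exclude the nearest poles $z=-j+2\pi i n/a_i$ of $F$.
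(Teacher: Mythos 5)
Your proposal follows essentially the same route as the paper: a Mellin/Gamma representation of each summand, interchange of sum and integral to sum the geometric series into the product $\prod_i \frac{2}{1-e^{-a_i(z+j)}}$, deformation onto the Hankel contour $P$ with the $(-z)^s$ branch, vanishing of the small circle, the $2i\sin(\pi s)$ phase combination, and the reflection formula — the paper explicitly models this on Riemann's argument in Edwards, Section 1.4. The only differences are cosmetic: you cite the Mellin identity rather than rederiving it by substitution, and you supply the analyticity-in-$s$ and pole-location details (which the paper leaves implicit) to justify the continuation and the $\delta,\epsilon$-independence.
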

\begin{proof}
  
We follow a similar argument as section 1.4 in \cite{edwards_1982}. Consider $\Gamma(s) = \int_{0}^{\infty} e^{-z}z^{s-1} dz$; note that for a value $m$ such that $Re(m) > 0$, mapping $z \rightarrow mx$ on the right hand side yields
\begin{equation*}
\int_{0}^{\infty} e^{-z}x^{s-1} dz =
    \int_0^{\infty} e^{-mz}(mz)^{s-1}*m dz = m^s \int_0^{\infty} e^{-mz}z^{s-1}dz,
\end{equation*}

\begin{equation*}
   \Rightarrow \int_0^{\infty} e^{-mz}z^{s-1}dz = \frac{\Gamma(s)}{m^s},
\end{equation*}
\begin{equation*}
    \int_0^{\infty}  e^{mj-mz}z^{s-1} dz= \frac{\Gamma(s)*e^{mj}}{m^s}.
\end{equation*}
Now, let $A_r = (a_1,a_2, \cdots, a_r) \in \mathbb{Z}_{> 0}^r$, where every entry has a positive real part, and let $M_r = (m_1,m_2,\cdots, m_r) \in \mathbb{Z}_{\geq 0}^r$. Let $\cdot$ be the dot product; then for some $c \geq 0$,
\begin{equation*}
    \int_0^{\infty}  e^{(j-z)(A_r \cdot M_r + c)}z^{s-1}dz = \frac{\Gamma(s) e^{j(M_r\cdot A_r + c})}{(A_r \cdot M_r + c)^s}.
\end{equation*}
Summing over every $M_r \in \mathbb{Z}_{\geq 0}^r$, we have
\begin{equation*}
    \begin{split}
        \Gamma(s) e^{jc} \sum_{M_r \in \mathbb{Z}_{\geq 0}^r} \frac{e^{j(M_r \cdot A_r })}{(M_r\cdot A_r+c)^s} &= 
    \int_0^{\infty} \sum_{M_r \in \mathbb{Z}_{\geq 0}^r} e^{j(A_r \cdot M_r)} e^{-z(A_r \cdot M_r + c)}z^{s-1}dx, \\ 
    &= \int_0^{\infty} e^{-cx}x^{s-1} \prod_{i=1}^r \sum_{m_i \geq 0} (e^{a_i(x+j)})^{m_i} dx, \\
    &= \int_0^{\infty} e^{-cx}x^{s-1} \prod_{i=1}^r \frac{1}{1-e^{-a_i(x+j)}} dx.
    \end{split}
\end{equation*}
Thus,
\begin{equation*}
     \zeta_E(s,c;A_r) = \frac{1}{e^{jc}\Gamma(s)} \int_0^{\infty} e^{-cx}x^{s-1} \prod_{i=1}^r \frac{2}{1-e^{-a_i(x+j)}} dx.
\end{equation*}
Now, consider the contour integral
\begin{equation*}
    \oint_P e^{-cx}(-x)^{s} \prod_{i=1}^r \frac{2}{1-e^{-a_i(x+j)}} \frac{dx}{x}.
\end{equation*}
We examine when $\epsilon \rightarrow 0$; the integral can then be partitioned as $\oint_P f = \oint_{+ \infty}^{\delta} f + \oint_{|x| = \delta}f + \oint_{\delta}^{\infty} f,$ where $f$ is the integrand. When $s > 1$, the middle term approaches 0 as $\delta$ approaches 0,therefore yielding
\begin{equation*}
    \begin{split}
        \oint_P e^{-cx}(-x)^{s} \prod_{i=1}^r \frac{2}{1-e^{-a_i(x+j)}} \frac{dx}{x} &= \left[ \lim_{\delta \rightarrow 0}  \oint_{+ \infty}^{\delta} e^{-cx}(-x)^{s} \prod_{i=1}^r \frac{2}{1-e^{-a_i(x+j)}} \frac{dx}{x} \right.\ \\ & +  \left. \oint_{\delta}^{\infty} e^{-cx}(-x)^{s} \prod_{i=1}^r \frac{2}{1-e^{-a_i(x+j)}} \frac{dx}{x} \right] \\
        &= ({e^{i\pi s} - e^{-i\pi s}}) \int_{0}^{\infty} e^{-cx}(x)^{s-1} \prod_{i=1}^r \frac{2}{1-e^{-a_i(x+j)}} dx \\
        &= 2i\sin(\pi s) e^{jc} \Gamma(s)\zeta_E(s,c,j;A_r),
    \end{split}    
\end{equation*}
\begin{equation*}
    \Rightarrow \zeta_E(s,c,j;A_r) = \frac{\Gamma(1-s)}{2\pi i (e^{jc})} \oint_P e^{-cx}(-x)^{s} \prod_{i=1}^r \frac{2}{1-e^{-a_i(x+j)}} \frac{dx}{x} .
\end{equation*}

\end{proof}
    
\begin{lemma}\label{lemma2}
    Assume the same parameters as in lemma \ref{lemma1}, and suppose  $s = -k$ is a negative integer. Then
\begin{equation}
     \zeta_E(-k,c,j;A_r) = \frac{1}{e^{jc}} E_k(c,j;A_r).
\end{equation}
\end{lemma}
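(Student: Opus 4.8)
The plan is to specialize the contour representation of $\zeta_E$ furnished by Lemma \ref{lemma1} to the value $s=-k$ and then read the answer off as a residue at the origin. Setting $s=-k$, the ambiguous factor $(-z)^{s}$ becomes $(-z)^{-k}=(-1)^{k}z^{-k}$, which is single-valued. The branch cut along the positive real axis that produced the $2i\sin(\pi s)$ factor in the derivation of Lemma \ref{lemma1} therefore disappears, and this is precisely the structural reason that a negative integer yields a clean closed form. Concretely I would study
\[
\zeta_E(-k,c,j;A_r)=\frac{\Gamma(1+k)}{2\pi i\,e^{jc}}\oint_P e^{-cz}(-z)^{-k}\prod_{i=1}^{r}\frac{2}{1-e^{-a_i(z+j)}}\,\frac{dz}{z},
\]
whose integrand is now meromorphic on a neighborhood of $P$.

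Next I would collapse the contour. Because the integrand is single-valued, the two rays of $P$ lying just above and just below the positive real axis traverse the same function in opposite senses and cancel, leaving only the small counterclockwise circle about the origin. Under the standing hypothesis the zeros of $\prod_i\bigl(1-e^{-a_i(z+j)}\bigr)$ all lie away from the origin, so choosing $\delta,\epsilon$ small the only singularity enclosed is the pole of the integrand at $z=0$; the residue theorem then gives $\oint_P=2\pi i\,\mathrm{Res}_{z=0}$, whence
\[
\zeta_E(-k,c,j;A_r)=\frac{\Gamma(1+k)}{e^{jc}}\,\mathrm{Res}_{z=0}\!\left[e^{-cz}(-z)^{-k}\prod_{i=1}^{r}\frac{2}{1-e^{-a_i(z+j)}}\,\frac{1}{z}\right].
\]

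To evaluate this residue I would set $h(z)=e^{-cz}\prod_{i=1}^{r}\tfrac{2}{1-e^{-a_i(z+j)}}$, which is holomorphic at $z=0$ because the non-degeneracy hypothesis keeps $e^{-a_i j}\neq 1$ for each $i$, and expand $h(z)=\sum_{m\ge 0}h_m z^m/m!$. The integrand equals $(-1)^k h(z)z^{-k-1}$, so its residue is $(-1)^k h_k/k!$; since $\Gamma(1+k)=k!$, the factorials cancel and $\zeta_E(-k,c,j;A_r)=(-1)^k h_k/e^{jc}$. Finally I would identify $h_k$ with the generalized Euler polynomial by comparing generating functions: the substitution $z\mapsto -z$ carries $\tfrac{2^r e^{cz}}{\prod_i(1-e^{a_i(z+j)})}=\sum_m E_m(c,j;A_r)z^m/m!$ into $h(z)$, the factor $(-1)^m$ from $e^{-cz}$ together with the reflected product producing $h_m=(-1)^m E_m(c,j;A_r)$ (for the imaginary values of interest, e.g. $j=i\pi$, one has $e^{a_i j}=e^{-a_i j}$, so the reflection fixes the second argument). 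Thus $h_k=(-1)^k E_k(c,j;A_r)$, the two signs cancel, and $\zeta_E(-k,c,j;A_r)=e^{-jc}E_k(c,j;A_r)$, as claimed.

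I expect the main obstacle to be the careful justification of the collapse to a single residue rather than the algebra: one must verify that the horizontal contributions truly cancel once the cut is absent, that the enclosing circle isolates only the pole of order $k+1$ at the origin (so that the hypothesis on $j$, equivalently on $c$, is exactly what keeps the neighboring zeros of $1-e^{-a_i(z+j)}$ outside the contour), and that the $e^{jc}$ prefactor together with the sign conventions in the generating function are tracked consistently so that the claimed factor $e^{-jc}$ survives.
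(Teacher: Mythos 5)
Your proposal is correct and follows essentially the same route as the paper: collapse the contour of Lemma \ref{lemma1} to the small circle about the origin, expand the integrand via the generating function of $E_m(c,j;A_r)$ evaluated at $-z$, and extract the coefficient as a residue, with the $(-1)^k$ factors cancelling against $(-z)^{-k}$. If anything you are more careful than the paper, which garbles the justification for discarding the two rays (it claims they ``approach $0$ as $\delta\rightarrow\infty$'' rather than cancelling by single-valuedness of $(-z)^{-k}$) and silently glosses over the sign issue in the second argument of the exponentials under $z\mapsto -z$, which you correctly flag as requiring $e^{2a_i j}=1$ (e.g.\ $j=i\pi$ with integer $a_i$).
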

\begin{proof}
    When the integral component of the zeta function is partitioned as in lemma \ref{lemma1}, then the left and right integral on the RHS approach 0 as $\delta \rightarrow \infty$ , so by lemma \ref{lemma1}
\begin{equation*}
    \begin{split}
        \zeta_E(-k,c,j;A_r) &= \frac{\Gamma(1+k)}{2\pi i (e^{jc})} \oint_{|x| = \delta} e^{-cx}(-x)^{-k} \prod_{i=1}^r \frac{2}{1+e^{-a_i(x+j)}} \frac{dx}{x} \\
        &= \frac{k!}{2\pi i (e^{jc})} \oint_{|x| = \delta} (-x)^{-k} \sum_{n = 0}^{\infty}E_m(c,j;A_r)  \frac{(-x)^n}{n!}\frac{dx}{x} \\
        & = \frac{k!}{2\pi i (e^{jc})} \sum_{n=0}^{\infty} \frac{(-1)^{n+k}E_m(c,j;A_r)}{n!} \oint_{|x| = \delta} x^{n-k} \frac{dx}{x}, \\
        & = \frac{1}{e^{jc}} E_k(c,j;A_r),
    \end{split}
\end{equation*}
where the last step is implied by the fact that $
   \frac{1}{2\pi i} \oint_{|x| = \delta} x^{n-k} \frac{dx}{x} =1$ when $n=k$ and $0$ otherwise.
\end{proof}
\begin{proof}[Proof of Theorem 1]
    The sum $ \sum_{M_r = 0_r}^{\infty}e^{j(A_r \cdot M_r})(A_r \cdot M_r+x)^{-s}$ is absolutely convergent, so by the Principle of Inclusion Exclusion,
    \begin{equation}\label{eqPIE}
    \begin{split}
        \sum_{M_r = 0_r}^{N_r}\frac{e^{j(A_r \cdot M_r})}{(A_r 
\cdot M_r+x)^s} &= \sum_{S \subseteq \{1,2, \cdots r\}} (-1)^{|S|} \sum_{\substack{0 \leq m_j < \infty \text{ for $j\notin S$}\\ n_j+1 \leq m_j < \infty \text{ for $j\in S$}}}\frac{e^{j(A_r \cdot M_r})}{(A_r 
\cdot M_r+x)^s},  \\
    &= \frac{1}{2^r}  \sum_{S \subseteq \{1,2, \cdots r\}} (-1)^{|S|} \sum_{M_r=0}^{\infty}\frac{e^{j(A_r \cdot M_r + A_s \cdot (N_S+1_{|S|})})}{(A_r 
\cdot M_r+A_S\cdot(N_S+1_{|S|})+x)^s}, \\
        &= \frac{1}{2^r} \sum_{S \subseteq \{1,2, \cdots r\}} (-1)^{|S|}e^{j(A_s \cdot (N_s+1_{|S|}))} \zeta_{E_r}(s,A_s \cdot (N_S+1_{|S|})+x,j;A_r),
    \end{split}   
    \end{equation}
and for a negative integer $s = -m$, analytic continuation implies 
\begin{equation*}
\begin{split}
     \sum_{M_r = 0_r}^{N_r}(A_r 
\cdot M_r+x)^m e^{j(A_r \cdot M_r)} &= \frac{1}{2^r} \sum_{S \subseteq \{1,2, \cdots r\}} (-1)^{|S|} e^{j(A_s \cdot (N_s+1_{|S|}))} \zeta_{E}(-m,A_S \cdot (N_S+1_{|S|})+x,j;A_r), \\
&= \frac{1}{2^r} \sum_{S \subseteq \{1,2, \cdots r\}} (-1)^{|S|} e^{j( A_s \cdot (N_s+1_{|S|})-c)}E_m(A_S \cdot (N_S+1_{|S|})+x,j; A_r),
\end{split}
\end{equation*}
where the last line follows from lemma \ref{lemma2}.
\end{proof}

\section{Proof of Theorem 2}

Note that the periodic function $\tilde{C_{n,k}}(x;a)$ is continuous over the intervals $(\frac{j-1}{k},\frac{j}{k})$ for $j = 1, 2 \cdots , k$, and at $x = \frac{j}{k}$ a jump occurs. In fact, for $x \in  (\frac{j-1}{k},\frac{j}{k})$, $\tilde{C}_{1,k}(x;a)$ is defined as 
\begin{equation*}
    \begin{split}
         \tilde{C}_{1,k}(x;a) &= \sum_{p=0}^{j-1}  e^{2i \pi a p/k}(x-\frac{1}{2} - \frac{p}{k})  + \sum_{q=j}^{k-1}  e^{2i \pi a q/k} (x+\frac{1}{2} - \frac{q}{k}) \\
         &= \sum_{p=1}^{j-1}  e^{2i\pi a p/k}(-\frac{1}{2} - \frac{p}{k}) +\sum_{q=j}^{k-1} e^{2i \pi a q/k}(\frac{1}{2} - \frac{q}{k}).
    \end{split}
\end{equation*}

With this in mind, let $f: \mathbb{R} \rightarrow \mathbb{C} $ be a continuous function and consider the integral
\begin{equation*}
    \begin{split}
        \int_0^1 \tilde{C}_{1,k}(x;a) f'(x) dx &= \sum_{j=1}^k \int_{\frac{j-1}{k}}^{\frac{j}{k}} \tilde{C}_{1,k;a}(x) f'(x)dx \\
        &= \sum_{j=1}^k (f(\frac{j}{k}) - f(\frac{j-1}{k})) \left(  \sum_{p=0}^{j-1}   e^{2i\pi ap/k}(-\frac{1}{2} - \frac{p}{k})  +\sum_{q=j}^{k-1} e^{2i \pi a q/k}(\frac{1}{2} - \frac{q}{k}) \right) \\
        &=  \sum_{j=1}^{k-1} e^{2i\pi a j/k} f(j/k) + f(0) \left(1+ \sum_{q = 0}^{k-1} \frac{q}{k}e^{2i \pi a q/k} \right) - f(1)  \sum_{q=0}^{k-1}\frac{q}{k} e^{2 i\pi a q/k},
    \end{split}
\end{equation*}
which implies that 
\begin{equation*}
    \sum_{j=1}^{k} e^{2i\pi a j/k} f(j/k) = (f(1)-f(0))  \left(1+ \sum_{q = 0}^{k-1} \frac{q}{k}e^{2i \pi a q/k} \right) +  \int_0^1 \tilde{C}_{1,k}(x;a) f'(x) dx,
\end{equation*}
so shifting the equation over for $m$ and $n$ and summing over them telescopes to yields
\begin{equation}
    \sum_{r = m}^n  \sum_{j=1}^{k} e^{2i\pi a j/k} f(r + j/k) = (f(n)-f(m)) \left(1+ \sum_{q = 0}^{k-1} \frac{q}{k}e^{2i \pi aq/k} \right) + \int_m^n \tilde{C}_{1,k}(x;a) f'(x) dx.
\end{equation}
 Note that 
\begin{equation*}
    \begin{split}
        C_{1,k}(a) &= -\frac{1}{2} + \sum_{q=1}^{k-1} e^{2i\pi aq/k}(\frac{1}{2}- \frac{q}{k}) \\
        &= -(1 + \sum_{q = 0}^{k-1} \frac{q}{k}e^{2i \pi q/k}),
     \end{split}
\end{equation*}
so after repeatedly applying integration by parts on (1) and substituting the above yields
\begin{equation*}
    \begin{split}
          \sum_{r = m}^n  \sum_{j=1}^{k} e^{2i\pi aj/k} f(r + j/k) &= (f(n)-f(m)) \left( -C_{1,k}(a)  \right) + \sum_{j=2}^q  \frac{C_{j,k}(a)(-1)^j}{j!} \left[f^{(j-1)}(n) - f^{(j-1)}(m) \right] + R \\
          &= \sum_{j=1}^k  \frac{C_{j,k}(a)(-1)^j}{j!} \left[f^{(j-1)}(n) - f^{(j-1)}(m) \right] + R \\
    \end{split}
\end{equation*}
where $R = \frac{(-1)^{q+1}}{k!} \int_m^n \tilde{C}_k(x) f^{(q)}(x) dx$. Note that we can set the indexing to begin a $j=0$ as $C_0,k;a = 0$, which is useful for clarifying some indexing.

\section{Proof of Theorem 3 and 4}
Here, we shall prove theorem 4 first. To do so, we shall make extensive usage of theorem \ref{theorem2} and follow a similar proof as in \cite{LANPHIER2022125716}. A direct calculation gives that $\frac{\partial^{(j)}}{\partial t^{(j)}}C_{s,m,k}^*(at+x;A_r) = (-1)^j (-s)_j a^j C_{s-j,m,k}^*(at+x;A_r)$. To apply theorem \ref{theorem2} an iterative manner, it is convenient to define the operators
\begin{equation*}
    \phi_{1,a}(C_{-s,m,k}^*(an+x; A)) = \frac{-1}{-s+1} \sum_{j = 0}^{q} (-ak)^{j-1} \binom{-s+1}{j} C_{j,k}(A) C_{-s-j+1,m,k}^* (x;A_r)
\end{equation*}
\begin{equation*}
    \phi_{2,a}(C_{-s,m,k}^*(an+x; A)) = \int_m^n \frac{(-1)^{q+1} \tilde{C}_{q,k}(a t +x; a)}{q!} C_{-s,m,k}^{*(q)}(at+x; A) dt,
\end{equation*}
and extend these definitions linearly so that $\phi_{j,a}$ acts on any multiple of $C_{-s,m,k}^*(an+x; A)$ that is independent of $n$ and $x$. The integral above is absolutely convergent, and since our summations are finite, we have
\begin{equation*}
    \phi_{j,a} \left( \int_0^{\infty} c(t) C_{-s,m,k}^{*(q)}(at+x;A) dt \right) = \int_0^{\infty} c(t) \phi_{j,a} (C_{-s,m,k}^{*(q)}(at+x;A)) dt,
\end{equation*}
where $c(t) = \frac{(-1)^{q+1}}{q!} \tilde{C}_{q,k}(at+x;a)$. From \cite{Lehmer}, $|\tilde{B}_q(t)| \leq 1$, so $|c(t)| \leq \frac{k}{q!}$. Applying theorem \ref{theorem2},
\begin{align*}
    \sum_{n=1}^{\infty} C_{-s,m,k}^*(an+x; A) * e^{2i \pi a n/k} =& \frac{-1}{-s+1} \sum_{j = 0}^{q} (-ak)^{j-1} \binom{-s+1}{j} C_{j,k}(A) C_{-s-j+1,m,k}^* (x;A)\\ & + \int_m^n \tilde{C}_k(x) C_{-s,m,k}^{*(q)}(at+x; A) dt \\
    &= \phi_{1,a}(C_{-s,m,k}^*(an+x; A)) + \phi_{2,a}(C_{-s,m,k}^*(an+x; A)) \\    
\end{align*}
Applying the above iteratively for $l \geq 1$,
\begin{equation*} \label{eq:1}
    \sum_{N_l = 1_l}^{\infty} C_{-s,1,k}^*(A_l \cdot N_l+x; a_l) *  e^{2i \pi A_l \cdot N_l/k}  =  \sum_{\substack{i_j \in \{1,2 \} \\ 1 \leq j \leq l}} (\phi_{i_1,a_1} \circ \cdots \circ \phi_{i_l,a_l})(C_{-s,1,k}(A_l \cdot N_l+x; a_l)).
\end{equation*}
We first consider the term where $i_j = 1$ for all $i_j$. We have 
\begin{equation*}  
    \begin{split}
         \phi_{1,a_l}(C^*_{-s,1,k}(A_l \cdot N_l+x; a_l)) &= \frac{-1}{-s+1} \sum_{j=0}^q (-a_lk)^{j-1} \binom{-s+1}{j}C_j(a_l) C_{-s-j+1,1,k}^*(A_{l-1} \cdot N_{l-1} +x; a_l) \\
         &= C^*_{1,k}(a_l) \sum_{j=0}^q(-k)^j \binom{-s}{j}C_{j,k}^*(a_l)(A_{l-1} \cdot N_{l-1} +x)^{-s-j} \\
         &=  C^*_{1,k}(a_l) C_{-s,q,k}(A_{l-1} \cdot N_{l-1} +x; a_l)
    \end{split}
\end{equation*}
Letting $\phi_{1,a_{l-j}}$ act on $\frac{1}{k^{j-1} (s+1) \cdots (s+j-1)} C_{-s+j-1,q,k}(A_{l-j} \cdot N_{l-j} + x; (a_{l-{(j-1)}}, \cdots, a_l))$, we have
\begin{align*}
   & \frac{1}{k^{j-1} (s+1) \cdots (s+j-1)} \frac{-1}{s+j} \\
   & \times \sum_{p =0}^q \binom{-s+j}{p} (-a_{l-j}k)^{p-1} C_{p,k}(a_{l-j})  C_{-s+j - p,q,k}(A_{l-j} \cdot N_{l-j} + x; (a_{l-{(j-1)}}, \cdots, a_l)) \\
   &= \frac{1}{k^{j} (s+1) \cdots (s+j)}  \sum_{p =0}^q \binom{-s+j}{p} (-k)^{p} C_{p,k}^*(a_{l-j})  \\
   & \times \left(\sum_{i=0}^q C_{i,k}^*(a_{l-{j-1}}, \cdots, a_l) (-k)^i \binom{-s+j-p}{i} (A_{l-j} \cdot N_{l-j}+x)^{-s+j-p-i})\right) \\
   &= \frac{1}{k^{j} (s+1) \cdots (s+j)} \sum_{u=0}^q \binom{-s+j}{u} (-k)^u \\
   & \left( \sum_{p+i = u} \binom{u}{k} C_{p,k}^*(a_{l-j})  C_{i,k}^*(a_{l-(j-1)}, \cdots, a_l)  \right)  (A_{l-j} \cdot N_{l-j}+x)^{-s+j-p-i}     \\
   &= \frac{1}{k^{j} (s+1) \cdots (s+j)} \sum_{u=0}^q \binom{-s+j}{u} (-k)^u
   C_{i,k}^*(a_{l-j}, \cdots, a_l)  (A_{l-j} \cdot N_{l-j}+x)^{-s+j-u} \\
   &=  \frac{1}{k^{j} (s+1) \cdots (s+j)} C_{-s+j,q,k}^* (A_{l-j} \cdot N_{l-j}+x;(a_{l-j}, \cdots, a_l)),
\end{align*}
which implies 
\begin{equation*}
    (\phi_{1,a_1} \circ \cdots \circ \phi_{1,a_l}) (C_{-s,1,k}(A_l \cdot N_l +x;a_l)) = C^*_{1,k}(a_l) * \frac{1}{k^{l-1} (s+1) \cdots (s+l-1)} C_{-s+l-1,q,k}^* (x; A_l).
\end{equation*}
Now consider the terms in equation $\ref{eq:1}$ containing some $\phi_{2,a_j}$. For such a term, let $S$ be the subset of $\{i_1, \cdots, i_l \}$ where $i_j = 2$ if and only if $i_j \in S$. From the previous computation, the action of all of the $\phi_{1,a_j}'s$ on $C_{-s,1,k} (A_l \cdot N_l + x; a_l)$ gives $C_{-s+l-|S|-1}(A_s \cdot N_s + x; A_{S^*})$, where $S^*$ is the subset of $\{i_1, \cdots i_l \}$ where $i_j = 1$ if and only if $i_j \in S^*$. By definition of $\phi_{2,a_j}$, we have
\begin{align*}
    & (\phi_{i_1,a_1} \circ \cdots \circ \phi_{i_l,a_l})(C_{-s,1,k}(A_l \cdot N_l+x; a_l)) \\
    &= M \underbrace{\int_0^{\infty} \cdots \int_0^{\infty}}_{|S| \text{ times}} c(\bold{t}_S) \frac{\partial^{(q+l-|S|)}}{\partial \bold{t}_S^{(q+l-|S|)}} C_{-s+l-|S|-1, q, k}(A_s \cdot \bold{t}_S+x ; A_{S^*}) d\bold{t}_S \\
\end{align*}
where $M$ is a constant independent of $x$, $\bold{t}_S = (t_{i_1 '}, \cdots, t_{i_{|S|}'})$, and $ \frac{\partial^{(q+l-|S|)}}{\partial \bold{t}_S^{(q+l-|S|)}} =  \frac{\partial^{(q+l-|S|)}}{\partial t_{i_1}'^{(q+l-|S|)}} \cdots  \frac{\partial^{(q+l-|S|)}}{\partial t_{i_{|S|}}'^{(q+l-|S|)}}$. Because $|\tilde{B}_n|$ is bounded,  $|\tilde{C}_{n,k}|$ is bounded and thus $|c(\bold{t}_S)|$ is bounded, so it is fairly straightforward to see that the expression above is bounded above by 
\begin{equation*}
    M'  \underbrace{\int_0^{\infty} \cdots \int_0^{\infty}}_{|S| \text{ times}} (A_S \cdot \bold{t}_S+x)^{-s+l - |S|(q+l-|S|)} d\bold{t}_S,
\end{equation*}
where $M'$ is independent of $x$. This expression is $x^{-s+l - |S|(q+l-|S|-1)}$ multiplied by a term independent of $x$. As $-s+l - |S|(q+l-|S|-1) \leq -s -|S|(q+1) \leq -s-q+1$ for $1 \leq |S| \leq r$, we can see that any term in $\ref{eq:1}$ with some $i_j = 2$ lies in $O(x^{Re(-s)-q+1})$. It follows that if we take $l = r$ we have, for $Re(s) > r$,
\begin{equation*}
    \begin{split}
         \zeta_E(s,x,\frac{2\pi i}{k};A_r) &= 2^r \sum_{N_r = 0_r}^{\infty} (A_r \cdot N_r +x)^{-s} * e^{\frac{2i\pi A_r \cdot N_r}{k}} \\
         &= 2^r \sum_{N_r = 1_r}^{\infty} (A_r \cdot N_r +x - A_r \cdot 1_r)^{-s} * e^{\frac{2i\pi A_r \cdot N_r}{k}}  \\
         &= 2^r \frac{1}{C_{1,k}(a_r)} \sum_{N_r = 1_r}^{\infty} C_{-s+1,1,k}^*(A_r \cdot N_r+x - A_r \cdot 1_r; a_r) *e^{\frac{2i\pi A_r \cdot N_r}{k}}  \\
         &= 2^r \frac{ \Gamma(-s+2)}{k^{r-1} \Gamma(-s+r+1)}  C_{-s+r,q,k}^* (x - A_r \cdot 1_r; A_r) + O(x^{Re(-s)-q+2}).
    \end{split}
\end{equation*}
where for $s \in \mathbb{Z}_{> r}$ the ratios of the gamma functions are determined by taking residues. By analytic continuation, Applying equation (\ref{eqPIE}) and the above, we have
\begin{equation*}
    \begin{split}
        \sum_{M_r = 0_r}^{N_r} (A_r \cdot M_r+x)^s * e^{\frac{2\pi i}{k}(A_r \cdot M_r)} &=\frac{1}{2^r} \sum_{S \subseteq \{1,2, \cdots r\}} (-1)^{|S|}e^{\frac{2\pi i}{k}(A_s \cdot (N_s+1_{|S|}))} \zeta_{E_r}(-s,A_s \cdot (N_S+1_{|S|})+x,j;A_r) \\
        &= \frac{ \Gamma(s+2)}{k^{r-1} \Gamma(s+r+1)} \\
        & \times \sum_{\substack{S \subseteq \{1,2, \cdots r\} \\ S \neq \emptyset}} (-1)^{|S|}e^{\frac{2\pi i}{k}(A_s \cdot (N_S+1_{|S|}))} C_{s+r,q,k} (A_S \cdot N_S+x;A_r) \\
        & + \zeta_E(-s, x; A_r) + O(N_{min}^{Re(s)-q+2}),
    \end{split}
\end{equation*}
proving theorem 3.

\printbibliography
\end{document}